\documentclass[a4paper,11pt]{amsart}
\usepackage{graphicx} 
\usepackage{amsfonts, amssymb, amsmath}
\usepackage{amsthm}
\usepackage{mathtools}
\usepackage[margin=1in]{geometry}
\usepackage{enumerate}
\usepackage[all]{xy}
\usepackage{color, xcolor}

\usepackage{tikz, tikz-cd}
\usetikzlibrary{3d}
\usepackage{caption}
\usepackage{subcaption}
\usepackage[all]{xy}

\theoremstyle{definition}

\newtheorem{Thm}{Theorem}[section]
\newtheorem{Prop}[Thm]{Proposition}

\usepackage{epic}

\begin{document}

\title{Degree of irrationality of a product of two elliptic curves}
\author{Yongnam Lee}

\address{Center for Complex Geometry, Institute for Basic Science (IBS), 55 Expo-ro, Yuseong-gu, Daejeon, 34126 Korea}
\email{ynlee@ibs.re.kr}

\subjclass{14E05, 14J27}
\keywords{degree of irrationality, elliptic curve}

\date{\today}

\begin{abstract}
In this short paper, we prove that, for any two complex elliptic curves \(E\) and \(F\), the
product \(E\times F\) has degree of irrationality \(3\). The upper bound
is obtained from compatible cyclic plane-cubic models of \(E\) and
\(F\): three diagonal bihomogeneous sections define a dominant rational
map \(E\times F\dashrightarrow\mathbb P^2\) of degree \(3\). The lower
bound follows by excluding dominant rational maps of degrees \(1\) and
\(2\) from an abelian surface to \(\mathbb P^2\).
\end{abstract}

\maketitle

\section{Introduction}

Let \(X\) be an irreducible projective variety of dimension \(n\). Its
\emph{degree of irrationality} is
\[
  \operatorname{irr}(X)
  =\min\bigl\{\deg(f)\,\bigm|\,
  f\colon X\dashrightarrow\mathbb P^n
  \text{ is dominant}\bigr\}.
\]
When \(X\) is a curve, this invariant is the gonality of its
normalization. More generally, measures of irrationality provide
numerical invariants that quantify how far a variety is from being
rational. Although they are natural birational invariants, they are
difficult to compute in general.

One basic open problem asks whether the degree of irrationality can
increase under specialization in a smooth one-parameter family. Chen
and Esser proved that it cannot increase in several cases
\cite{CE}. In particular, they asked whether there exist
non-isogenous elliptic curves \(E\) and \(F\) such that
\(\operatorname{irr}(E\times F)=4\); see
\cite[Example~3.2]{CE}. Chen and Martin similarly highlighted the
open problem of deciding whether the product of two very general
elliptic curves has degree of irrationality \(3\) or \(4\)
\cite{CM26}. Contrary to the expectation that degree \(4\) should
occur, we prove the following.

\begin{Thm}\label{degree3}
For every pair of complex elliptic curves \(E\) and \(F\),
\[
  \operatorname{irr}(E\times F)=3.
\]
\end{Thm}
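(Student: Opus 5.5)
We split the proof into a lower bound and an upper bound, following the strategy announced in the abstract.

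\emph{Lower bound.} Set \(A=E\times F\); this is an abelian surface. Degree \(1\) is impossible because \(A\) is not rational: \(h^0(A,\Omega^2_A)=1\), while rational surfaces have \(p_g=0\).

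Now suppose \(f\colon A\dashrightarrow\mathbb P^2\) is dominant of degree \(2\). Then \(\mathbb C(A)/\mathbb C(\mathbb P^2)\) is a quadratic extension, hence Galois. The nontrivial element of its Galois group is a birational involution \(\iota\) of \(A\). Since \(A\) is a minimal surface of nonnegative Kodaira dimension, every birational self-map of \(A\) is biregular. Moreover \(\iota\) has the rational surface \(\mathbb P^2\) as the quotient, up to birational equivalence. But \(\iota\) acts on the one-dimensional space \(H^0(\Omega^2_A)\). If it acts trivially, the nonzero \(2\)-form descends to a resolution of \(A/\iota\), which then has \(p_g\ge1\) and is not rational. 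So \(\iota\) must act by \(-1\) on \(H^0(\Omega^2_A)\).

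Write \(\iota(x)=\alpha(x)+t\) with \(\alpha\) a group automorphism. Then \(\det d\alpha=-1\) with \(\alpha^2=1\), so \(d\alpha\) has eigenvalues \(1,-1\). Consequently the invariant holomorphic \(1\)-form survives, and the quotient has irregularity \(q\ge1\). Its resolution then maps onto the curve \(A/B\) for an elliptic curve \(B\), and so is not rational. This contradiction gives \(\operatorname{irr}(A)\ge3\).

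\emph{Upper bound.} We need to exhibit a dominant rational map of degree exactly \(3\), for arbitrary \(E\) and \(F\). The plan is to put \(E\) and \(F\) into compatible cubic models carrying a cyclic symmetry. For instance, take Hesse-type models \(x_0^3+x_1^3+x_2^3=\lambda x_0x_1x_2\) and \(y_0^3+y_1^3+y_2^3=\mu y_0y_1y_2\). These are invariant under the diagonal \(\mu_3\)-actions \((x_0:\zeta x_1:\zeta^2x_2)\) and the analogous action on the \(y\)'s. We then consider the three bihomogeneous sections
\[
  s_i=x_iy_i,\qquad i=0,1,2,
\]
of \(\mathcal O(1,1)\) on \(E\times F\), and the map \(\phi=(s_0:s_1:s_2)\).

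Degree count: \(\bigl(\mathcal O(1,1)\bigr)^2=2\cdot3\cdot3=18\). Base points arise where \(x_i=0\) and \(y_j=0\) with \(\{i,j\}\) hitting all three indices. In the Hesse model each coordinate line meets \(E\) in three points, so we must remove their contributions from \(18\). The main computation is to check that these base points account for exactly \(15\), including their multiplicities. Equivalently, we verify directly that a general fiber \(\{x_iy_i=c_i\}\) has exactly \(3\) points. The natural guess is that the fiber is an orbit of a \(\mu_3\) acting as \((x,y)\mapsto(\zeta x,\zeta^{-1}y)\) coordinatewise, which preserves each product \(x_iy_i\).

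If the naive diagonal choice gives the wrong degree, the fallback is to twist: use \(s_i=x_iy_{\sigma(i)}\) for a permutation \(\sigma\), or choose the Weierstrass-type cyclic models \(y^2z=x^3+\dots\) adapted so that the \(\mu_3\)-actions act compatibly. We then compute the degree as \(18\) minus the local intersection multiplicities at the base locus. This explicit base-point and multiplicity computation, ensuring it is valid for every \(\lambda,\mu\) including the special curves with \(j=0\) or \(1728\), is the expected main obstacle. We also must check that \(\phi\) is dominant, i.e., that the \(s_i\) are algebraically independent. This follows once the general fiber is shown to be finite.
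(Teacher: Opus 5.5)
Your lower bound is correct. It is the paper's argument run in the other order. You first force \(\iota^*=-1\) on \(H^0(\Omega^2_A)\), then use the \(\iota\)-invariant map \(1+\alpha\colon A\to A\) onto an elliptic curve. The paper instead first forces \(\iota^*=-\mathrm{id}\) on \(H^0(\Omega^1_A)\), hence \(\iota=t_a\circ[-1]\), and then uses the invariant \(2\)-form on the Kummer-type quotient.

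The genuine gap is the upper bound: the construction you write down has degree \(18\), not \(3\). A Hesse cubic \(x_0^3+x_1^3+x_2^3=\lambda x_0x_1x_2\) contains none of the vertices \(e_i\); at \([1:0:0]\) the equation reads \(1=0\). So at each point of \(E\) at most one \(x_i\) vanishes, and likewise on \(F\). A common zero of \(x_0y_0,x_1y_1,x_2y_2\) would need all three indices covered by one vanishing \(x_i\) and one vanishing \(y_j\), which is impossible. The zeros of the individual \(x_i\) that you propose to subtract are not base points. Hence the system is base-point free, \(\phi^*\mathcal O_{\mathbb P^2}(1)=\mathcal O(1,1)\), and \(\phi\) is a morphism of degree \(L^2=18\). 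Replacing \(x_iy_i\) by \(x_iy_{\sigma(i)}\) changes nothing.

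The \(\mu_3\)-orbit heuristic does not rescue this. The diagonal \(\mu_3\) acts freely on a Hesse cubic, i.e.\ by translations by \(3\)-torsion points, so it accounts only for a factor \(3\) of \(18\). More fundamentally, a degree-\(3\) map whose general fibre is a group orbit is Galois with group \(\mathbb Z/3\) acting on \(A\) without invariant \(1\)-forms. For non-isogenous \(E,F\) this would force \(j(E)=j(F)=0\), which is impossible (the paper's final proposition). So that heuristic cannot guide a construction for arbitrary \(E,F\).

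The missing idea is to choose plane models that pass through the coordinate vertices, with the coordinate triangle inscribed cyclically, so that base points of the right multiplicities are forced. The paper takes \(E=C_\lambda\colon x_0^2x_1+x_1^2x_2+x_2^2x_0+\lambda x_0x_1x_2=0\), for which \((x_i=0)|_E=e_{i+1}+2e_{i+2}\). For \(F\) it takes the oppositely oriented model \(z_0^2z_2+z_2^2z_1+z_1^2z_0+\mu z_0z_1z_2=0\). The base locus of the \(x_iz_i\) is then the six points \((e_i,e_j)\), \(i\neq j\):
\begin{itemize}
\item at \((e_i,e_{i+1})\) the sections are \(v^2,u^2,uv\) up to units, so the base ideal is \(\mathfrak m^2\);
\item at \((e_i,e_{i-1})\) the base ideal is \(\mathfrak m\).
\end{itemize}
One blow-up at each point resolves the system, and \(M^2=18-3\cdot4-3\cdot1=3\).

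The opposite orientation matters. With equal orientations, each of the six base ideals is of the form \((v,u^2)\) or \((u,v^2)\), contributing \(2\) each, and one gets degree \(18-12=6\). Finally, you must check that this one-parameter family realises every \(j\)-invariant. The paper does this from \(j(C_\lambda)=-\lambda^3(\lambda^3+24)^3/(\lambda^3+27)\), taking \(\lambda^3=t\) with \(t\) a root of \(t(t+24)^3+j_0(t+27)=0\); note \(t\neq-27\). Without these ingredients your proposal proves only \(\operatorname{irr}(E\times F)\ge 3\).
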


The same conclusion was obtained independently by Huang, Jiang, and
Yang as part of their classification of complex abelian surfaces of
degree of irrationality \(3\) \cite{HJY}. They prove that a
complex abelian surface admits a dominant rational map of degree \(3\)
to \(\mathbb P^2\) if and only if it admits a polarization of type
\((1,1)\), \((1,2)\), \((1,3)\), or \((1,6)\). Earlier, Yoshihara
constructed degree-\(3\) maps for certain self-products of elliptic
curves with complex multiplication \cite{Yosh}.

We briefly describe the proof. The lower bound
\(\operatorname{irr}(E\times F)\geq 3\) follows from the fact that an
abelian surface is not rational and cannot admit a dominant rational
map of degree \(2\) to \(\mathbb P^2\). For the upper bound, we use the
cyclic plane-cubic family
\[
  C_\lambda\colon
  x_0^2x_1+x_1^2x_2+x_2^2x_0+\lambda x_0x_1x_2=0.
\]
Its \(j\)-invariant is \(j(C_\lambda)
=-\frac{\lambda^3(\lambda^3+24)^3}{\lambda^3+27},\)
and every complex \(j\)-invariant occurs in this family. After choosing
compatible cyclic models for \(E\) and \(F\), the three sections
\(x_0z_0,x_1z_1,x_2z_2\) define a rational map
\(E\times F\dashrightarrow\mathbb P^2.\)
Its base locus consists of three double base points and three simple
base points. Blowing them up gives a base-point-free divisor \(M\) with
\[
  M^2=18-3\cdot2^2-3\cdot1^2=3,
\]
so the resulting map is dominant of degree \(3\).

Section~2 presents a motivating explicit example and describes the
associated exceptional Pompilj triple plane and its branch 
divisor. In Section~3, we prove Theorem~\ref{degree3} and show that, for
non-isogenous factors, the degree \(3\) function field extension arising
from our construction is not Galois. Throughout, we work over the field
of complex numbers.

\subsection*{Acknowledgements}
The author was supported by the Institute for Basic Science
(IBS-R032-D1) and thanks the Department of Mathematics at the National
University of Singapore for its hospitality during his visit. He is
grateful to Carl Lian and De-Qi Zhang for helpful comments, and to Carl
Lian for sharing another example of a product \(E\times F\) with
\(\operatorname{irr}(E\times F)=3\). The initial explicit example in
Section~2 emerged from discussions with ChatGPT.

\section{Example}
 In this section, we give an example of two non-isogenous elliptic curves $E$ and $F$ such that the degree of irrationality of $E\times F$ is three. 
 We illustrate this example because it provides a key idea for the proof of the theorem. 

We take
\[
E:\quad x_0^2x_1+x_1^2x_2+x_2^2x_0=0,
\]so \(j(E)=0\). Let \(\alpha\in\mathbb C\) satisfy
\(
\alpha^3=-18+6\sqrt3,
\) and take
\[
F:\quad
z_0^2z_2+z_2^2z_1+z_1^2z_0+
\alpha z_0z_1z_2=0.
\] Then \(j(F)=1728\),  and both cubics are smooth. Moreover,
\[
\operatorname{End}^0(E)\cong\mathbb Q(\sqrt{-3}),
\qquad
\operatorname{End}^0(F)\cong\mathbb Q(i),
\] so \(E\) and \(F\) are not isogenous.

\begin{Prop}
The rational map \[
E\times F\dashrightarrow\mathbb P^2,\qquad
(x,z)\longmapsto[x_0z_0:x_1z_1:x_2z_2]
\]has degree
\(
18-\bigl(3\cdot2^2+3\cdot1^2\bigr)=3.
\)
\end{Prop}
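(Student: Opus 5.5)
The plan is to compute the degree as the self-intersection of the moving part of the linear system, after resolving its base points. Put \(L=\mathrm{pr}_1^*\mathcal O_E(1)\otimes \mathrm{pr}_2^*\mathcal O_F(1)\) on \(E\times F\). The map is given by the three sections \(x_0z_0,\,x_1z_1,\,x_2z_2\in H^0(L)\), and
\[
L^2=2\deg\mathcal O_E(1)\cdot\deg\mathcal O_F(1)=18 .
\]
If \(\pi\colon S\to E\times F\) blows up the base points, including any infinitely near ones, and \(M=\pi^*L-\sum m_pE_p\) is base-point-free, then the resolved morphism \(S\to\mathbb P^2\) has degree \(M^2\) whenever \(M^2>0\). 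Positivity of \(M^2\) also gives dominance, since the image is then not a curve. So the proof reduces to locating the base points and computing their multiplicities.

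\emph{Locating the base locus.} A point \((x,z)\) is a base point exactly when, for each \(i\), \(x_i=0\) or \(z_i=0\). I first check which points of \(E\) lie on a coordinate line. On \(E\), setting \(x_0=0\) gives \(x_1^2x_2=0\), and the other lines behave in the same way by the cyclic symmetry. Hence \(E\) meets the coordinate triangle only in the coordinate points \(e_0,e_1,e_2\). At each \(e_i\) the curve is tangent to one coordinate line and transverse to the other. The same holds for \(F\); its equation is the cyclic cubic with the roles of the indices permuted, and the term \(\alpha z_0z_1z_2\) does not affect these intersections. Consequently a base point must have \(x=e_i\) and \(z=e_j\). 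Since \(x=e_i\) has zeros exactly in the coordinates other than \(i\), we need \(z_i=0\), that is, \(j\neq i\). This leaves six candidate points \((e_i,e_j)\) with \(i\neq j\), and all six are base points.

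\emph{Local multiplicities.} At each of these six points I will choose local parameters \(t\) on \(E\) and \(s\) on \(F\), namely the coordinate that vanishes simply. The other vanishing coordinate then equals a unit times the square of the parameter. For example, at \(e_0\in E\) the equation reads \(x_1\approx -x_2^2\), and at \(e_1\in F\) it reads \(z_0\approx -z_2^2\). At \((e_0,e_1)\) the three sections are therefore, up to units, \(s^2,\ t^2,\ ts\). They generate \(\mathfrak m^2\), so this is an ordinary double base point: one blow-up with \(m_p=2\) resolves it. By the cyclic symmetry, \((e_1,e_2)\) and \((e_2,e_0)\) behave the same way. For the three points \((e_i,e_j)\) of the opposite orientation, I expect the tangencies of \(E\) and of \(F\) to fall on different coordinates. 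The ideal should then be generated by \(t\) and \(s\) up to units, i.e. a simple base point with \(m_p=1\), resolved by a single blow-up.

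\emph{Main obstacle.} The hardest step is exactly this local computation at the three non-cyclic points. I must verify that the ideal there is really \(\mathfrak m\), and not a curvilinear ideal such as \((t,s^2)\), which would leave an infinitely near base point. I also need to confirm that after the blow-ups no base points remain on the exceptional curves. For the double points this holds because \(\mathfrak m^2\) restricts to the complete system of conics on \(E_p\), which is base-point-free. Granting these checks, \(M\) is base-point-free and
\[
M^2=18-3\cdot 2^2-3\cdot 1^2=3>0,
\]
so the map is dominant of degree \(3\).
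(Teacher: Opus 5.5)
Your strategy is the paper's: find the six base points \((e_i,e_j)\), \(i\neq j\), show that three have ideal \(\mathfrak m^2\) and three have ideal \(\mathfrak m\), blow up once, and get \(M^2=18-12-3=3\). Your base-locus argument and your computation at the double points \((e_i,e_{i+1})\) are correct.

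The gap is the step you flag yourself. At \((e_i,e_{i-1})\) you only ``expect'' the ideal to be \(\mathfrak m\), and the heuristic you give for it is backwards. Let \(k\) be the third index at \((e_i,e_j)\), and put \(a=\operatorname{ord}_{e_j}(z_i)\) and \(b=\operatorname{ord}_{e_i}(x_j)\), both in \(\{1,2\}\). In the charts \(x_i=1\), \(z_j=1\), the three sections are, up to units, \(s^a\), \(t^b\) and \(t^{3-b}s^{3-a}\). So the ideal is \(\mathfrak m\) exactly when \(a=b=1\), that is, when \(E\) and \(F\) are tangent to coordinate lines of the \emph{same} index \(k\). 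If the tangencies fall on different indices, the ideal is either \(\mathfrak m^2\) or curvilinear, \((s,t^2)\) or \((t,s^2)\). The first is your cyclic case, where \(E\) is tangent to \(x_{i+1}=0\) and \(F\) to \(z_i=0\). The second is exactly the bad case you worried about.

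The missing check follows at once from data you already have. Take \((e_0,e_2)\). On \(E\), take \(t=x_2\), with \(x_1\approx-x_2^2\). On \(F\), the chart \(z_2=1\) gives \(z_1(1+z_0z_1+\alpha z_0)=-z_0^2\), so take \(s=z_0\), with \(z_1\sim s^2\). Hence \(x_0z_0\sim s\), \(x_2z_2\sim t\) and \(x_1z_1\sim t^2s^2\), and the ideal is \((s,t)=\mathfrak m\); both curves are tangent to the index-\(1\) line. Cyclic symmetry handles the other two points, and after one blow-up the leading terms \([s:t]\) have no common zero on the exceptional line.

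The opposite orientation of \(F\) is what forces \(a=b\) at all six points: \(a=b=2\) at \((e_i,e_{i+1})\) and \(a=b=1\) at \((e_i,e_{i-1})\). With both cubics in the same orientation one would instead get \(a\neq b\) everywhere. With this computation inserted, your proof is complete and matches the paper's.
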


\begin{proof}
Let
\[
S=E\times F,\qquad
L=\mathcal O_E(1)\boxtimes\mathcal O_F(1),
\]and consider the three sections
\( s_i=x_i z_i\in H^0(S,L),\ i=0,1,2.\)

Write
\(
H_E=\mathcal O_E(1),
H_F=\mathcal O_F(1)
\). Let $p_E$ and $p_F$ be projections from $E\times F$ to $E$ and $F$, respectively. The line bundles \(H_E\) and \(H_F\) both have degree 3. Then, on \(E\times F\),
\[
L^2
=2(p_E^*H_E\cdot p_F^*H_F)
=2\deg(H_E)\deg(H_F)
=2\cdot3\cdot3
=18.
\]Thus two general linear combinations of \(s_0,s_1,s_2\) intersect in \(18\) points, counted with multiplicities, including the fixed base point contributions.

This linear system has six base points. Let
\[
e_0=[1:0:0],\quad e_1=[0:1:0],\quad e_2=[0:0:1].
\]For
\[
E:\quad x_0^2x_1+x_1^2x_2+x_2^2x_0+\lambda x_0x_1x_2=0,
\]the coordinate line intersections are
\((x_i=0)|_E=e_{i+1}+2e_{i+2}.\) Equivalently, at \(e_i\),
\(\operatorname{ord}_{e_i}(x_{i+1})=2, 
\operatorname{ord}_{e_i}(x_{i+2})=1.\) For the oppositely oriented cubic
\[
F:\quad z_0^2z_2+z_2^2z_1+z_1^2z_0+\mu z_0z_1z_2=0,
\]we have
\( \operatorname{ord}_{e_i}(z_{i-1})=2,
\operatorname{ord}_{e_i}(z_{i+1})=1.\) 

The simultaneous equations
\[
x_0z_0=x_1z_1=x_2z_2=0
\]give exactly the six points
\( (e_i,e_j) \text{ with}\  i\ne j.\) They divide into two kinds:
\[
p_i=(e_i,e_{i+1}),\qquad
q_i=(e_i,e_{i-1}).
\]

Among the six base points, there are three double base points.
Consider
\(p_i=(e_i,e_{i+1}).\) Choose local parameters \(u\) on \(E\) at \(e_i\) and \(v\) on \(F\) at \(e_{i+1}\). Up to units,
\(x_{i+1}\sim u^2, x_{i+2}\sim u,\) and
\(z_i\sim v^2, z_{i+2}\sim v.\) Meanwhile \(x_i\) and \(z_{i+1}\) are units. Hence
\[
s_i=x_i z_i\sim v^2, \
s_{i+1}=x_{i+1}z_{i+1}\sim u^2, \
s_{i+2}=x_{i+2}z_{i+2}\sim uv.
\]
where all subscripts in this computation are taken modulo 3. Thus 
\[
I_{p_i}=(u^2,uv,v^2)=\mathfrak m_{p_i}^2.\] 
This is a base point of multiplicity \(2\). Its contribution to the intersection of two general members is \(2^2=4.\) After blowing up \(p_i\), the leading terms on the exceptional line are \([u^2:uv:v^2].\)
They have no common zero on \(\mathbb P^1\), so there is no infinitely near base point.

The other three base points are simple. At
\(q_i=(e_i,e_{i-1}),
\) two of the sections have independent linear terms. In suitable local parameters,
\(I_{q_i}=(u,v)=\mathfrak m_{q_i}.\) Thus \(q_i\) is a simple base point and contributes \(1\). Again, no further base point lies above \(q_i\).

Let \(\pi:\widetilde S\longrightarrow S\) be the blowup of these six points, with exceptional curves \(P_i\) over \(p_i\) and \(Q_i\) over \(q_i\). 
For the resulting morphism \(\widetilde\phi:\widetilde S\longrightarrow\mathbb P^2,\) we let
\(M=\widetilde\phi^*\mathcal O_{\mathbb P^2}(1).\) Then
\[
M=\pi^*L-2\sum_{i=0}^2P_i-\sum_{i=0}^2Q_i.
\]Using \(P_i^2=Q_i^2=-1\) and the orthogonality of distinct exceptional divisors,
\(M^2= L^2-3\cdot2^2-3\cdot1^2=3.\) Since \(M^2=3>0\), its image cannot be a curve:
Since \(M=\widetilde\phi^*\mathcal O_{\mathbb P^2}(1)\), a
one-dimensional image would force \(M^2=0\). Thus \(M^2>0\) implies that
the image is two-dimensional.

Hence it maps dominantly onto \(\mathbb P^2\). Therefore
\(M^2=\deg(\widetilde\phi)\cdot
\bigl(\mathcal O_{\mathbb P^2}(1)^2\bigr)
=\deg(\widetilde\phi).\) Hence \(\deg\phi=3.\)
\end{proof}

\begin{Prop}
For the exceptional Pompilj triple-plane model associated with this construction \cite{CM25}, the degree of the branch divisor $B$ of this example is 18. 
And \(B=2\Delta+B_{12}\) where \(\Delta\) is the coordinate triangle and \(B_{12}\) is a reduced degree \(12\) curve.
\end{Prop}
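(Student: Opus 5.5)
The plan is to compute the branch divisor from Riemann--Hurwitz for the finite part of \(\widetilde\phi\colon\widetilde S\to\mathbb P^2\), and to locate the non-reduced part by hand along the coordinate lines. First, the degree. Since \(K_S=0\), we have \(K_{\widetilde S}=\sum P_i+\sum Q_i\). The ramification divisor is \(R=K_{\widetilde S}-\widetilde\phi^*K_{\mathbb P^2}=3M+\sum P_i+\sum Q_i\). For a general line \(\ell\subset\mathbb P^2\), the curve \(C=\widetilde\phi^{-1}(\ell)\in|M|\) is a triple cover of \(\ell\). By adjunction,
\[
2g(C)-2=M^2+M\cdot K_{\widetilde S}=3+\Bigl(2\sum P_i+\sum Q_i\Bigr)\cdot\Bigl(\sum P_i+\sum Q_i\Bigr)=3+6+3=12 .
\]
Riemann--Hurwitz on \(C\to\ell\) then gives \(\deg B=12+2\cdot3=18\). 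Equivalently, \(\deg B=R\cdot M=9+6+3=18\). One must check that no curve is contracted in a way that affects this count. Contracted curves do not meet a general \(C\), so the computation on a general line is valid, and this gives \(\deg B=18\).

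Next, the triangle. Consider the line \(\{y_i=0\}\) in \(\mathbb P^2\), where \(y_i=s_i=x_iz_i\). Its pullback to \(S\) is \(\{x_i=0\}\times F+E\times\{z_i=0\}\). On \(E\) we have \((x_i)=e_{i+1}+2e_{i+2}\), and on \(F\) the oriented analogue \((z_i)=e_{i-1}+2e_{i+1}\). So the total transform contains the curves \(\{e_{i+2}\}\times F\) and \(E\times\{e_{i+1}\}\) with multiplicity \(2\). After subtracting the exceptional parts one gets
\[
\widetilde\phi^*\{y_i=0\}=2(\text{strict transform of }\{e_{i+2}\}\times F)+2(\text{strict transform of }E\times\{e_{i+1}\})+(\text{reduced parts})+(\text{exceptional curves}).
\]
I will check which of these components map onto the line and which are contracted. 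The doubled components that map finitely onto the line contribute ramification of index \(2\). So the line \(\{y_i=0\}\) appears in \(B\) with multiplicity at least \(2\), since each sheet with ramification index \(2\) contributes \(e-1=1\), and two such components give \(2\). I then need to show the multiplicity is exactly \(2\): the remaining sheet over the line is unramified generically, which can be seen by a local computation at a general point of the line. This gives \(B\ge 2\Delta\). Since \(\deg 2\Delta=6\), we get \(B_{12}=B-2\Delta\) of degree \(12\).

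Finally, reducedness of \(B_{12}\). This is the step I expect to be the main obstacle. I would argue that the remaining ramification off \(\Delta\) is generically simple. The Galois closure of a general triple cover has monodromy \(S_3\), and the non-Galois property is proven later, but for this example I would verify it directly. The plan is to write the cubic resolvent, i.e.\ the discriminant of the degree-\(3\) field extension \(\mathbb C(S)/\mathbb C(y)\). To do this, eliminate via \(y_1/y_0\) and \(y_2/y_0\) using the two cubic equations, and compute the discriminant polynomial. Then check that after removing \(y_0^2y_1^2y_2^2\) it is squarefree. It suffices to exhibit one general line meeting \(B_{12}\) in \(12\) distinct points. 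Concretely, I would restrict to a line such as \(y_2=y_0+y_1\) and compute numerically, or symbolically with \(\alpha^3=-18+6\sqrt3\), to see twelve simple roots.
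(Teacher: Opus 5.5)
Your degree count (adjunction plus Riemann--Hurwitz on a general \(C\in|M|\)) is the paper's own cross-check and is fine. There is one sign slip: \((2\sum P_i+\sum Q_i)\cdot(\sum P_i+\sum Q_i)=-9\), and \(M\cdot K_{\widetilde S}=+9\) only because of the minus sign in \(M=\pi^*L-2\sum P_i-\sum Q_i\). Certifying reducedness of \(B_{12}\) by exhibiting one line that meets it in \(12\) distinct points is a legitimate substitute for the paper's route. The paper instead substitutes \(\psi\) into the explicit dual sextic \(\Phi\), factors off \((y_0y_1y_2)^2\), and checks squarefreeness mod \(13\). The genuine gap is your treatment of the triangle.

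The doubled components \(\{e_{i+2}\}\times F\) and \(E\times\{e_{i+1}\}\) do not map onto the line \(\{y_i=0\}\). On \(\{e_k\}\times F\) only \(x_k\) is nonzero, so \(\widetilde\phi\) contracts this curve to the coordinate point \([e_k]\), and likewise for \(E\times\{e_k\}\). In fact every non-exceptional component of \(\widetilde\phi^*\{y_i=0\}\) is contracted, as the paper notes. Your sheet count is also impossible for a triple cover: two sheets of index \(2\) plus a further unramified sheet over a general point of the line would require degree at least \(5\).

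What actually happens is that the two doubled curves meet at the simple base point \(q_{i-1}=(e_{i+2},e_{i+1})\). There \(s_i\sim u^2v^2\) vanishes to order \(4\), while \(s_{i-1}\sim v\) and \(s_{i+1}\sim u\). Hence
\[
\widetilde\phi^*\{y_i=0\}=(\text{contracted curves})+3\,Q_{i-1},
\]
and \(Q_{i-1}\) maps isomorphically onto \(\{y_i=0\}\) via the linear terms of \(s_{i-1},s_{i+1}\). So the line is the image of a single curve along which \(\widetilde\phi\) is totally ramified of index \(3\). The discriminant therefore has multiplicity exactly \(3-1=2\) there, and no argument about a remaining unramified sheet is needed (or possible).

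This matches the paper's picture: \(\psi\) sends \(\{y_0=0\}\) to \([1:0:0]\), i.e.\ to the flex tangent \(A=0\) of \(\Gamma\), which is a cusp of \(\Gamma^\vee\). With this correction your route works. Since \(R\ge 2\sum Q_i\), these curves account for \(2\sum Q_i\cdot M=6\) of the \(18\), leaving \(12\) for \(B_{12}\).
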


\begin{proof}
Let
\[
S=E\times F,\qquad
\phi:S\dashrightarrow\mathbb P^2_y,\qquad
(x,z)\longmapsto[y_0:y_1:y_2]=[x_0z_0:x_1z_1:x_2z_2].
\]Blow up the three double base points \(p_i\) and three simple base points \(q_i\): $\pi:\widetilde S\longrightarrow S$.

The resolved linear system is defined by the divisor
\[M=\pi^*L-2\sum_{i=0}^2P_i-\sum_{i=0}^2Q_i,
\qquad M^2=3,\]
giving a generically finite morphism $\widetilde\phi:\widetilde S\longrightarrow\mathbb P^2$ of degree \(3\).
This morphism is not finite. For instance, the strict transforms of
$E\times\{e_i\}, \{e_i\}\times F$ are contracted to coordinate points of \(\mathbb P^2\). Therefore the triple plane model is its Stein factorization:
\[
\widetilde S\xrightarrow{\rho}X
\xrightarrow{\pi}\mathbb P^2,
\]where \(X\) is normal, \(\rho\) is birational and \(\deg\pi=3.\)

Fix \(x=[x_0:x_1:x_2]\in E\). The fiber \(\{x\}\times F\) maps to a plane cubic \(C_x\subset\mathbb P^2_y\).
Substitute \(z_i=y_i/x_i\) into
\[
z_0^2z_2+z_2^2z_1+z_1^2z_0+\alpha z_0z_1z_2=0.
\]Set
\[
A=x_0^2x_1,\qquad
B=x_1^2x_2,\qquad
C=x_2^2x_0,\qquad
D=x_0x_1x_2.
\]Then
\[
C_x:\quad
B\,y_0^2y_2+A\,y_2^2y_1+C\,y_1^2y_0
+\alpha D\,y_0y_1y_2=0.
\]
Because \(x\in E\), \(A+B+C=0\) and \(ABC=D^3.\) Eliminating \(C=-A-B\), we obtain
\[
C_x=
A\bigl(y_2^2y_1-y_1^2y_0\bigr)
+B\bigl(y_0^2y_2-y_1^2y_0\bigr)
+\alpha D\,y_0y_1y_2.
\]Thus the curves \(C_x\) lie in the net
\[
\mathcal N=
\left\langle
y_2^2y_1-y_1^2y_0,\ 
y_0^2y_2-y_1^2y_0,\ 
\alpha y_0y_1y_2
\right\rangle
\subset |\mathcal O_{\mathbb P^2}(3)|.
\]The parameter point \([A:B:D]\) lies on the plane cubic
\[
\Gamma:\quad D^3+AB(A+B)=0.
\]This is a smooth cubic isomorphic to \(E\):
On a dense open set, the map
\[
[x_0:x_1:x_2]\longmapsto[A:B:D]
=[x_0^2x_1:x_1^2x_2:x_0x_1x_2]
\]
has inverse \([A:B:D]\longmapsto[AD:AB:D^2].\)
Thus it is birational. Since both curves are smooth projective cubics, it extends to an isomorphism. 

Evaluation of the above net defines a rational map
\(\psi:\mathbb P^2_y\dashrightarrow(\mathbb P^2_{\mathcal N})^\vee\) given by cubic forms:
\[
\psi(y)=
\left[
y_2^2y_1-y_1^2y_0:
y_0^2y_2-y_1^2y_0:
\alpha y_0y_1y_2
\right].
\]For a general \(y\), the cubics in \(\mathcal N\) containing \(y\) form a line
\(\ell_y\subset\mathbb P^2_{\mathcal N}.\) The three points of
\(\ell_y\cap\Gamma\) correspond exactly to the three points in the fiber of \(\phi\) over \(y\).
The triple plane of \(\Gamma\) is
\[
\Gamma^{(2)}
\longrightarrow
(\mathbb P^2_{\mathcal N})^\vee,
\]sending an effective divisor \(x_1+x_2\) to the line through \(x_1,x_2\). Thus \(X\to\mathbb P^2_y\) is the normalization of the pullback of the basic exceptional Pompilj triple plane.

The branch curve of
\[
\Gamma^{(2)}\longrightarrow(\mathbb P^2_{\mathcal N})^\vee
\]is the dual curve \(\Gamma^\vee\). Since \(\Gamma\) is a smooth plane cubic,
\(\deg\Gamma^\vee=3(3-1)=6.\) The pullback map \(\psi\) is defined by cubic forms, so
\(\psi^*\mathcal O(1)=\mathcal O_{\mathbb P^2}(3).\) Therefore the branch divisor of \(X\to\mathbb P^2\) is
\(B=\psi^{-1}(\Gamma^\vee),\) and \(\deg B=3\cdot6=18.\)

We have \(K_{\widetilde S}\cdot M=3\cdot2+3\cdot1=9.\) A general \(C\in|M|\) has
\[
2g(C)-2=M^2+M\cdot K_{\widetilde S}=3+9=12,
\]so \(g(C)=7\). The degree three map \(C\to\mathbb P^1\) has total ramification
\[
2g(C)-2-3(-2)=12+6=18,
\]again giving \(\deg B=18\), counted as the discriminant branch divisor.

Recall \(\Gamma: D^3+AB(A+B)=0
\subset\mathbb P^2_{[A:B:D]}.\) Let \([U:V:W]\) be dual coordinates. An equation of the dual sextic \(\Gamma^\vee\) is
\[
\begin{aligned}
\Phi(U,V,W)
={}&\bigl(W^3-3(U+V)UV\bigr)^2\\
&-4(U+V)\bigl(3UV-(U+V)^2\bigr)
   \bigl(W^3-3(U+V)UV\bigr)\\
&+12UV\bigl(3UV-(U+V)^2\bigr)^2.
\end{aligned}
\]
And \(\psi:\mathbb P^2_y\dashrightarrow(\mathbb P^2_{\mathcal N})^\vee\) with
\(U=y_1(y_2^2-y_0y_1), \ V=y_0(y_0y_2-y_1^2),\ W=\alpha y_0y_1y_2.\) Consequently, the branch divisor
\[
B: \quad
\Phi\bigl(U(y),V(y),W(y)\bigr)=0.
\]Direct substitution gives the factorization
\[
\Phi\bigl(U(y),V(y),W(y)\bigr)
=(y_0y_1y_2)^2Q_{12}(y_0,y_1,y_2),
\]where \(Q_{12}\) is a homogeneous polynomial of degree \(12\). We set \(B_{12}: Q_{12}=0\).
It shows that 
\(B=2\Delta+B_{12}\) where \(\Delta\) is the coordinate triangle and \(B_{12}\) is a reduced degree \(12\) curve.
A direct square-freeness computation shows that \(Q_{12}\) is reduced.
For instance, after reduction modulo the prime
\((13,\sqrt3-4)\subset\mathbb Z[\sqrt3]\), the polynomial retains degree
12 and is relatively prime to its three first partial derivatives.
\[
\gcd\!\left(
\overline Q_{12},
\frac{\partial\overline Q_{12}}{\partial y_0},
\frac{\partial\overline Q_{12}}{\partial y_1},
\frac{\partial\overline Q_{12}}{\partial y_2}
\right)=1
\quad\text{in }\mathbb F_{13}[y_0,y_1,y_2].
\]
Hence \(Q_{12}\) is squarefree in characteristic zero. 
\end{proof}

\section{Proof of the theorem}

The following proposition is well known to experts; we include its proof for the reader’s convenience.
\begin{Prop}
Let $E$ and $F$ be elliptic curves. Then \(\operatorname{irr}(E\times F)\ge 3.\)
\end{Prop}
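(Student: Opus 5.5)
We must exclude dominant rational maps $f\colon S=E\times F\dashrightarrow\mathbb P^2$ of degree $1$ and $2$. The plan is to treat the two degrees separately. In both cases the key input is that $S$ is an abelian surface, so $K_S=0$, $h^0(S,K_S)=p_g(S)=1$, and $S$ contains no rational curves.

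\emph{Degree $1$.} Suppose $f$ is birational. Then $S$ is rational. But birational invariants such as $p_g(S)=1$ and $q(S)=2$ are nonzero, whereas a rational surface has $p_g=q=0$. So this case is impossible.

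\emph{Degree $2$.} Suppose $\deg f=2$. Then $\mathbb C(S)/\mathbb C(\mathbb P^2)$ is a quadratic extension, hence Galois. Its nontrivial automorphism gives a birational involution $\sigma$ of $S$. Since $S$ is a minimal surface of Kodaira dimension $0$, every birational self-map of $S$ is biregular, because a birational map between minimal models of nonnegative Kodaira dimension is an isomorphism. So $\sigma\in\operatorname{Aut}(S)$, and $S/\langle\sigma\rangle$ is birational to $\mathbb P^2$.

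It therefore suffices to show that no involution of an abelian surface has a rational quotient. Write $\sigma(a)=\tau(a)+b$ with $\tau$ a group automorphism, $\tau^2=1$. We split into cases according to the eigenvalues of $\tau$ acting on $H^0(S,\Omega^1_S)\cong\mathbb C^2$.
\begin{itemize}
\item If $\tau=\mathrm{id}$, then $\sigma$ is a translation. The quotient is again an abelian surface, not rational.
\item If $\tau$ has eigenvalues $(1,-1)$, then the invariant $1$-form descends to a holomorphic $1$-form on a resolution of the quotient. Hence $q\ge1$ and the quotient is not rational.
\item If $\tau=-1$, then $\sigma$ is $a\mapsto -a+b$. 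This acts trivially on $H^0(K_S)=\wedge^2H^0(\Omega^1)$, so the $2$-form $dx\wedge dz$ is invariant. The quotient is then a Kummer surface. Its resolution is a K3 surface with $p_g=1$, so it is not rational.
\end{itemize}

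Rather than tracking each case, a uniform argument works. Any invariant holomorphic $1$- or $2$-form on $S$ descends to the smooth locus of $S/\sigma$. Since the quotient has only quotient singularities (or is smooth), such forms extend to a resolution. In each case above, $\sigma$ preserves either some nonzero $1$-form or the $2$-form, because $\det\tau=\pm1$ and the case $\det\tau=-1$ gives an invariant $1$-form. So the resolution has $q>0$ or $p_g>0$, and it is not rational.

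The main obstacle is the degree $2$ case. Specifically, one has to justify that the birational involution is regular and that forms descend across the fixed points. I would handle the first by minimality of $S$ together with the absence of rational curves, which implies that any rational map from a smooth surface to $S$ extends. I would handle the second by the standard fact that quotient singularities are rational, so invariant forms extend over their resolutions.
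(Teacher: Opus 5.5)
Your proposal is correct and follows essentially the same route as the paper. Both arguments make the Galois involution regular, analyze its linear part on $H^0(\Omega^1)$ to get either an invariant $1$-form or $\tau=[-1]$ with an invariant $2$-form, and then extend invariant forms to a resolution of the quotient to contradict $q=p_g=0$. The paper merely shortcuts your case split: a rational quotient has no invariant $1$-forms, which forces $\tau=[-1]$ directly.
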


\begin{proof}
Let \(A=E\times F.\) Since \(A\) is an abelian surface, it is not rational, so \(\operatorname{irr}(A)\ne1\). It remains to rule out degree \(2\).
Assume there is a dominant rational map
\[
f:A\dashrightarrow\mathbb P^2
\]of degree \(2\). In characteristic \(0\), the corresponding quadratic extension
\( \mathbb C(\mathbb P^2)\subset \mathbb C(A)\) is Galois. Let \(\iota\) be the nontrivial Galois involution. Then
\(
A/\langle\iota\rangle\sim_{\mathrm{bir}}\mathbb P^2.
\) Every birational self-map of an abelian surface is regular, so \(\iota\in\operatorname{Aut}(A)\). Consider its action on
\[
V:=H^0(A,\Omega_A^1),
\qquad \dim V=2.
\]Since the quotient is rational, it has no holomorphic one-forms. Hence
\(
V^\iota=0.
\) Because \(\iota^2=1\), its eigenvalues on \(V\) are \(\pm1\). Having no invariant vector forces
\(
\iota^*|_V=-\operatorname{id}_V.
\) 

Write \(\iota=t_a\circ\alpha\), where \(t_a\) is a translation and
\(\alpha\) is a group automorphism of \(A\). Since translations act
trivially on holomorphic one-forms and a group endomorphism of a complex
abelian variety is determined by its differential, the equality
\(\iota^*|_V=-\mathrm{id}_V\) implies \(\alpha=[-1]\). Thus
\(\iota=t_a\circ[-1]\), whose fixed locus is finite (it is given by
\(2x=a\)). Consequently, \(A/\langle\iota\rangle\) has only quotient
singularities of type \(\frac12(1,1)\). The invariant holomorphic
two-form descends to the smooth locus of the quotient and extends to its
minimal resolution. This contradicts
\(
A/\langle\iota\rangle\sim_{\mathrm{bir}}\mathbb P^2,
\) since a rational surface has \(p_g=0\).
Therefore no degree-\(2\) rational map \(A\dashrightarrow\mathbb P^2\) exists. 
\end{proof}

\begin{proof}[Proof of Theorem~\ref{degree3}]
Consider the family of cubics
\[
C_\lambda:\quad
x_0^2x_1+x_1^2x_2+x_2^2x_0+\lambda x_0x_1x_2=0.
\]
It is smooth exactly when \(\lambda^3\neq-27\), and
\[
j(C_\lambda)=
-\frac{\lambda^3(\lambda^3+24)^3}{\lambda^3+27}.
\]
This family contains a representative of every isomorphism class of elliptic curves. Indeed, for a prescribed \(j_0\in\mathbb C\), choose a root \(t\) of
\[
t(t+24)^3+j_0(t+27)=0.
\]This quartic always has a complex root, and \(t\neq-27\), since its value at \(-27\) is \(729\). Taking \(\lambda^3=t\) gives a smooth \(C_\lambda\) with \(j(C_\lambda)=j_0\).
Consequently, after choosing suitable plane models, arbitrary \(E\) and \(F\) may be presented as 
\[
E=C_\lambda,\qquad
F=C_\mu^{\mathrm{op}}:
z_0^2z_2+z_2^2z_1+z_1^2z_0+\mu z_0z_1z_2=0,
\]because exchanging \(z_1\) and \(z_2\) identifies the opposite family with the same cyclic family.
Now consider the rational map \(f\):
\[
f:E\times F\dashrightarrow\mathbb P^2,
\qquad
(x,z)\longmapsto[x_0z_0:x_1z_1:x_2z_2].
\]The base locus consists of six points
\(p_i=(e_i,e_{i+1}),\ q_i=(e_i,e_{i-1}).\) At \(p_i\), the local base ideal is
\[
(u^2,uv,v^2)=\mathfrak m_{p_i}^2,
\]whereas at \(q_i\) it is \((u,v)=\mathfrak m_{q_i}.\) These descriptions are independent of \(\lambda,\mu\).  Blowing up once at each of these points resolves the base locus, with no infinitely near base points.
Writing \(L=\mathcal O_E(1)\boxtimes\mathcal O_F(1)\), we have \(L^2=18\). The divisor defining the resolved linear system is
\[
M=\pi^*L-2\sum_{i=0}^2P_i-\sum_{i=0}^2Q_i,
\]and hence
\[
M^2=18-3\cdot2^2-3\cdot1^2=3.
\]Because the resolved system is base point free and \(M^2>0\), its image is two-dimensional, so
\[
\deg f=M^2=3.
\]Thus \(\operatorname{irr}(E\times F)\le3\).
On the other hand, the previous proposition shows \(\operatorname{irr}(E\times F)\ge 3.\) This proves the theorem.
\end{proof}

\begin{Prop}
Let \(E_1\) and \(E_2\) be non-isogenous complex elliptic curves and let
\(A=E_1\times E_2.\) Then the extension \(\mathbb C(\mathbb P^2)\subset\mathbb C(A)\) induced by the map \(f\) constructed in the proof of Theorem 3.2 is not Galois.
\end{Prop}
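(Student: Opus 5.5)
Suppose, for contradiction, that the degree $3$ extension $\mathbb C(\mathbb P^2)\subset\mathbb C(A)$ induced by $f$ is Galois. Its Galois group $G$ then has order $3$ and acts faithfully on $\mathbb C(A)$ by birational self-maps of $A$. As in the proof of the lower bound proposition, every birational self-map of an abelian surface is regular. So $G=\langle\sigma\rangle\subset\operatorname{Aut}(A)$ with $\sigma$ of order $3$, and $A/G$ is birational to $\mathbb P^2$. The plan is to show that no automorphism of order $3$ of $E_1\times E_2$ can have a rational quotient when $E_1,E_2$ are non-isogenous. This mirrors the degree-$2$ argument.

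The first step is to reduce $\sigma$ to a product form. Write $\sigma=t_a\circ\alpha$ with $\alpha$ a group automorphism. Since $E_1$ and $E_2$ are non-isogenous, $\operatorname{Hom}(E_1,E_2)=\operatorname{Hom}(E_2,E_1)=0$. Hence $\operatorname{End}(A)=\operatorname{End}(E_1)\times\operatorname{End}(E_2)$ and $\alpha=(\alpha_1,\alpha_2)$ with $\alpha_k\in\operatorname{Aut}(E_k)$ as group automorphisms. Consequently $\sigma=\sigma_1\times\sigma_2$ with $\sigma_k\in\operatorname{Aut}(E_k)$, and $\sigma^3=1$ forces $\sigma_k^3=1$ for each $k$.

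The second step uses holomorphic forms. On $V=H^0(A,\Omega^1_A)=H^0(E_1,\Omega^1)\oplus H^0(E_2,\Omega^1)$, the action of $\sigma$ is $\operatorname{diag}(\alpha_1',\alpha_2')$, where $\alpha_k'$ is the differential of $\alpha_k$, a cube root of unity. Rationality of $A/G$ gives $q=0$, so $V^\sigma=0$ and both $\alpha_k'$ are primitive cube roots of unity $\omega_k\ne1$. In particular each $\alpha_k$ is an automorphism of order $3$ of $E_k$, which forces $j(E_k)=0$. This already contradicts non-isogeny: two curves with $j=0$ are isomorphic, hence isogenous. Since non-isogeny forbids both factors from having $j=0$, at least one factor, say $E_1$, has $\alpha_1=\mathrm{id}$. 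Then $\sigma$ preserves the pullback of a nonzero form on $E_1$, so $q(A/G)\ge1$, a contradiction.

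The main obstacle I expect is making the identification $\operatorname{Aut}(A)=\{t_a\circ(\alpha_1,\alpha_2)\}$ fully rigorous. This requires care that the order-$3$ condition passes to the factors, which it does because translations in the product split componentwise as well. I would also check that a birational quotient by a finite group has irregularity equal to $\dim V^G$. This is standard: invariant $1$-forms descend to a resolution, since quotient singularities do not obstruct extension of holomorphic forms. With these in place, no $p_g$ computation is needed; the irregularity argument alone gives the contradiction.
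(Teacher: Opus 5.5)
Your proposal is correct and follows essentially the same route as the paper. Both arguments use regularity of birational self-maps, the product form of automorphisms of $E_1\times E_2$ coming from $\operatorname{Hom}(E_1,E_2)=\operatorname{Hom}(E_2,E_1)=0$, and vanishing of invariant $1$-forms forcing both linear parts to act by primitive cube roots of unity, so that $j(E_1)=j(E_2)=0$. You usefully make explicit that $\sigma^3=1$ forces $\alpha_k^3=1$, which the paper leaves implicit. Your closing sentences about $q(A/G)\ge 1$ are redundant, since the contradiction is already reached once both $j$-invariants are $0$.
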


\begin{proof}
By the proof of Theorem~\ref{degree3}, the map \(f\) has degree \(3\). Suppose it were Galois. Its Galois group is \(G\simeq\mathbb Z/3\mathbb Z\), acting birationally on \(A\). Every birational self-map of an abelian variety is regular, so \(G\subset\operatorname{Aut}(A)\).
Because \(E_1\) and \(E_2\) are non-isogenous,
\[
\operatorname{Hom}(E_1,E_2)=\operatorname{Hom}(E_2,E_1)=0.
\]Consequently, every automorphism of \(A\) has the form
\[
g(x,z)=\bigl(\alpha_1(x)+a_1,\alpha_2(z)+a_2\bigr),
\]where \(\alpha_i\) is a group automorphism of \(E_i\).
Since \(A/G\) is birational to \(\mathbb P^2\),
\(
H^0(A,\Omega_A^1)^G=0.
\) Translations act trivially on holomorphic forms. For \(i=1,2\), the automorphism \(\alpha_i^*\) does not act as the identity on
\(H^0(E_i,\Omega_{E_i}^1).
\) Both must act by a primitive cube root of unity. Thus both \(E_1\) and \(E_2\) admit an origin preserving automorphism of order \(3\), forcing
\(
j(E_1)=j(E_2)=0.
\) Over \(\mathbb C\), they are therefore isomorphic, contradicting non-isogeny. 
\end{proof}

\end{document}